\theoremstyle{plain}
\newtheorem{theorem}{Theorem}[section]
\newtheorem{proposition}[theorem]{Proposition}
\newtheorem{lemma}[theorem]{Lemma}
\newtheorem{corollary}[theorem]{Corollary}
\theoremstyle{definition}
\newtheorem{remark}[theorem]{Remark}
\newtheorem{example}[theorem]{Example}
\newcommand{\norm}[1]{\left\lVert#1\right\rVert}
\newcommand{\vol}{{\rm{vol}}}
\newcommand{\RR}{{\mathbb{R}}}
\newcommand{\NN}{{\mathbb{N}}}
\subjclass{11K38, 11K31, 11J71}	
\keywords{Discrepancy, Pair Correlation Statistic, Gap Structure, Three Gap Theorem, van der Corput sequences, Kronecker sequences}
\title[Discrepancy, Finite Gap Properties, and Pair Correlations]{Some Connections Between Discrepancy, Finite Gap Properties, and Pair Correlations}
\date{\today}
\author{Christian Wei\ss{}}
\address{{\bf{Ruhr West University of Applied Sciences,}}\\ {{Department of Natural Sciences, Duisburger Str. 100,}}\\{{45479 M\"ulheim an der Ruhr, Germany}}}
\email{christian.weiss@hs-ruhrwest.de}
\begin{document}

\maketitle

\begin{abstract}  A generic uniformly distributed sequence $(x_n)_{n \in \mathbb{N}}$ in $[0,1)$ possesses Poissonian pair correlations (PPC). Vice versa, it has been proven that a sequence with PPC is uniformly distributed. Grepstad and Larcher gave an explicit upper bound for the discrepancy of a sequence given that it has PPC. As a first result, we generalize here their result to the case of $\alpha$-pair correlations with $0 < \alpha < 1$. Since the highest possible level of uniformity is achieved by low-discrepancy sequences it is tempting to assume that there are examples of such sequences which also have PPC. Although there are no such known examples, we prove that every low-discrepancy sequence has at least $\alpha$-pair correlations for  $0 < \alpha <1$. According to Larcher and Stockinger, the reason why many known classes of low-discrepancy sequences fail to have PPC is their finite gap property. In this article, we furthermore show that the discrepancy of a sequence with the finite gap property plus a condition on the distribution of the different gap lengths can be estimated. As a concrete application of this estimation, we re-prove the fact that van der Corput and Kronecker sequences are low-discrepancy sequences. Consequently, it follows from the finite gap property that these sequences have $\alpha$-pair correlations for $0 < \alpha < 1$.
\end{abstract}

\section{Introduction}

Let $\varphi = \frac{1+\sqrt{5}}{2}$ be the golden mean and consider the Kronecker sequence $(x_n)_{n \in \mathbb{N}} = (\left\{n\varphi\right\})_{n \in \mathbb{N}}$, where $\left\{ x \right\} := x - \lfloor x \rfloor$ denotes the fractional part of $x \in \mathbb{R}$. It exhibits several remarkable properties. First, it is a uniformly distributed sequence and even a classical example of a low-discrepancy sequence which means that it is roughly speaking as uniformly distributed as possible, see e.g. \cite{Nie92}. Second, the famous Three Gap Theorem going back to S\'os in \cite{Sos58} holds: if we place the first $N$ points of the sequence on a circle, then there are at most three distinct distances between adjacent points. Despite this high level of uniformity, $(x_n)_{n \in \mathbb{N}}$ fails to have another property of a generic uniformly distributed sequence: the Kronecker sequence of the golden mean does not have Poissonian pair correlations, \cite{LS20}. On the contrary, it is known that $(x_n)_{n \in \mathbb{N}}$ very tightly fails to have this property because it has $\alpha$-pair correlations for all  $0 < \alpha < 1$, see \cite{WS19}.\\[12pt]
This paper is intended to shed more light on the connections between the properties mentioned in the first paragraph. Let us start with the third-named concept which was originally introduced in \cite{RS98}. A sequence $(x_n)_{n \in \mathbb{N}}$ in $[0,1)$ has Poissonian pair correlations if the pair correlation statistics
\begin{align} \label{eq5}
F_N(s) := \frac{1}{N} \# \left\{ 1 \leq l \neq m \leq N \ : \ \norm{x_l - x_m} \leq \frac{s}{N} \right\}
\end{align}
tends to $2s$ for all $s \in \mathbb{R}_{\geq 0}$ as $N \to \infty$, where $\norm{\cdot}$ is the distance to the nearest integer. Although Poissonian pair correlations are a generic property of uniformly distributed sequences in $[0,1)$, only some explicit example sequences with this property have been found hitherto, see \cite{BMV15} and \cite{LST21}. \\[12pt] 
The concept of Poissonian pair correlations can be generalized to dimension $d$ and an arbitrary norm $\norm{\cdot}$ on the $d$-dimensional torus $\mathds{T}_d = [0,1)^d$ by saying that a sequence $(x_n)_{n \in \mathbb{N}}$ in $\mathds{T}_d$ has Poissonian pair correlations with respect to $\norm{\cdot}$ if 
$$\lim_{N \to \infty} \frac{1}{N^2} \frac{\# \left\{ 1 \leq l \neq m \leq N \ : \ \norm{x_l - x_m} \leq \frac{s}{N^{1/d}} \right\}}{\vol(B(0,sN^{-1/d}))} = 1$$
for all $s\geq 0$, where $\vol(B(0,r))$ is the (Lebesgue) volume of the ball of radius $r$, compare \cite{NP07}. Note that $\vol(B(0,sN^{-1}) = 2s/N$ for e.g. the maximum metric and thus the definition truly generalizes the one-dimensional case from \eqref{eq5}. As a concrete example in higher dimension, the \textbf{following} was studied in detail in \cite{HKL19}: let the norm of a point $x \in \RR$ be again defined by its distance to the nearest integer  and for $(x_1,\ldots,x_d) \in \mathbb{R}^d$ set
$$\norm{x}_\infty :=  \max(\norm{x_1},\ldots,\norm{x_d}).$$
According to \cite{HKL19}, Theorem~1, uniformly distributed sequences then generically have Poissonian pair correlations. Amongst others, the authors moreover showed that also the opposite is true, i.e. having Poissonian pair correlations implies uniform distribution of the sequence (Theorem 2 in \cite{HKL19}). The choice of the exponent $\tfrac{1}{d}$ in the definition of Poissonian pair correlations is a coherent approach because if the exponent was $>\tfrac{1}{d}$, then obviously the left hand side would go to $\infty$ and thus $\alpha = \tfrac{1}{d}$ is the biggest possible choice. However, the exponent might be chosen smaller than $\tfrac{1}{d}$ which leads for $0 < \alpha < \tfrac{1}{d}$ to the expression
$$F_N^{\alpha}(s) :=  \frac{1}{N^2} \frac{\# \left\{ 1 \leq l \neq m \leq N \ : \ \norm{x_l - x_m} \leq \frac{s}{N^{\alpha}} \right\}}{\vol(B(0,sN^{-\alpha}))}.$$
If $F_N^\alpha(s) \to 1$ for all $s \geq 0$, then a sequence is said to have $\alpha$-pair correlations, see \cite{NP07}. In other contexts $\alpha$-pair correlations are also called number variance (with exponent $\alpha$), compare \cite{Mar07}.  In \cite{HZ21}, Theorem 4, it is proven for dimension $d=1$ and $\norm{\cdot}_{\infty}$ that $F_N^{\alpha_1}(s) \to 1$ implies $F_N^{\alpha_2}(s) \to 1$ for all $s \geq 0$ if $\alpha_1 \geq \alpha_2$. Hence the larger $\alpha$ is, the harder it is to achieve $\alpha$-pair correlations under the mentioned conditions. The proof from \cite{HZ21} is however general enough to almost verbatim transfer the same implication also to our more general setting. \\[12pt]
Next recall that the discrepancy of a sequence $(x_n) \in [0,1)^d$ is defined by
$$D_N(x_n) := \sup_{B \subset [0,1)^d} \left| \frac{1}{N}\#(\left\{x_i | 1 \leq i \leq N \right\} \cap B) 
- \lambda_d(B) \right|,$$
where the supremum is taken over all intervals $B = [a,b) \subset [0,1)^d$. If the $\sup$ is further restricted to sets of the form $B^* = [0,b) \subset [0,1)^d$, then we speak of the star-discrepancy $D_N^*(x_n)$. In fact, both types of discrepancies are related by the inequalities $D_N^*(x_n) \leq D_N(x_n) \leq 2^dD_N^*(x_n)$, which means that their asymptotic behavior is equivalent, see e.g. \cite{KN74}. Therefore, it often depends on the context if it is more convenient to work with the usual discrepancy or the star-discrepancy. If the condition on the $\sup$ is instead relaxed to all convex subsets $C \subset [0,1)^d$, then the corresponding quantity $J_N(x_n)$ is called isotropic discrepancy and the inequalities $D_N(x_n) \leq J_N(x_n) \leq 4dD_N(x_n)^{1/d}$ hold. For more details we refer the reader to \cite{DP10},  \cite{Nie92}. If a sequence $(x_n)_{n \in \mathbb{N}} \in [0,1)^d$ exhibits a star-discrepancy of order
\begin{equation}\label{eq:asymp_bound}
D_N^*(x_n) = O(N^{-1}(\log N)^{d}),
\end{equation}
then it is called a low-discrepancy sequence. It is conjectured that this is the fastest possible rate of convergence. In fact, this is known to be true for dimension one by the work of Schmidt, \cite{Sch72}. Since low-discrepancy sequences may be interpreted as sequences which are as uniformly distributed as possible, there might be examples of sequences in this class having Poissonian pair correlations. However, all attempts to find such examples have failed so far and it has even been proved for many explicit types of low-discrepancy sequences (in dimension $d = 1$ and also in higher dimensions) that they do not have Poissonian pair correlations, see \cite{BCC19}, \cite{LS20}, \cite{PS19}, \cite{WS19}. In this article, we will argue why it might nonetheless be worth to keep on looking for examples of Poissonian pair correlations in the class of low-discrepancy sequences. We will not only restrict our analysis to dimension $1$ but also consider higher dimensions. It is the first aim of this article to show that \textit{all} low-discrepancy have $\alpha$-pair correlations for $\norm{\cdot}_\infty$ and $\tfrac{1}{d} > \alpha > 0$. 
\begin{theorem} \label{main:thm} Let $(x_n)_{n \in \mathbb{N}} \in \mathds{T}_d$ (equipped with $\norm{\cdot}_\infty$) be a sequence with $D_N(x_n) = o(N^{-(1-\varepsilon)})$ for $0 < \varepsilon < \tfrac{1}{d}$, then $(x_n)_{n \in \mathbb{N}}$ has $\alpha$-pair correlations for any  $0<\alpha<(\tfrac{1-\varepsilon}{d})$.
\end{theorem}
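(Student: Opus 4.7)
The plan is to reduce the definition of converging number variance directly to the hypothesis on $D_N$. Set $\alpha := (1-\varepsilon)/d$ and $r_N := s N^{-\alpha}$; for $N$ sufficiently large one has $r_N < 1/2$, so the metric ball $B(x_l, r_N) \subset \mathds{T}_d$ in the $\norm{\cdot}_\infty$ norm is a (possibly wrap-around) cube of $d$-volume $(2r_N)^d = (2s)^d N^{-(1-\varepsilon)}$. Unwrapping on the torus, this cube decomposes into at most $2^d$ axis-aligned boxes of $[0,1)^d$, to which the definition of $D_N(x_n)$ applies directly.

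First I would fix $l \in \{1,\dots,N\}$ and sum the discrepancy inequality over the (at most $2^d$) sub-boxes making up $B(x_l, r_N)$. This yields
$$\left| \#\{1 \le m \le N : \norm{x_l - x_m}_\infty \le r_N\} - N(2r_N)^d \right| \le 2^d N D_N(x_n),$$
with $N(2r_N)^d = (2s)^d N^{\varepsilon}$. Summing over $l = 1, \ldots, N$ and subtracting $N$ to remove the diagonal contribution $m = l$ (which is always inside the ball) gives
$$\#\{1 \le l \ne m \le N : \norm{x_l - x_m}_\infty \le r_N\} = (2s)^d N^{1+\varepsilon} - N + O\!\left(N^2 D_N(x_n)\right).$$

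Finally, the hypothesis $D_N(x_n) = o(N^{-(1-\varepsilon)})$ gives $N^2 D_N(x_n) = o(N^{1+\varepsilon})$, and trivially $N = o(N^{1+\varepsilon})$ since $\varepsilon > 0$. Dividing by $N^2 \vol(B(0, r_N)) = (2s)^d N^{1+\varepsilon}$ yields $F_N^\alpha(s) \to 1$, which is precisely converging number variance at the exponent $\alpha = (1-\varepsilon)/d$. The only mild technical obstacle is the wrap-around of a cube that straddles the boundary of $[0,1)^d$; this costs at most a factor $2^d$ inside a big-$O$ and does not affect the asymptotics. Otherwise the argument is a direct consequence of the box discrepancy bound, with no additional ideas required.
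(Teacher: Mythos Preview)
Your proposal is correct and follows essentially the same approach as the paper's proof: apply the discrepancy bound to the $\norm{\cdot}_\infty$-ball of radius $sN^{-\alpha}$ around each $x_l$, sum over $l$, and divide by $N^{2}\vol(B(0,sN^{-\alpha}))$. You are slightly more explicit than the paper about the torus wrap-around (the $2^{d}$ factor) and the removal of the diagonal term $l=m$, but these are cosmetic refinements within the same argument.
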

As an immediate consequence we can derive that low-discrepancy sequences \textit{almost} have Poissonian pair correlations. 
\begin{corollary} \label{cor:ld:ppc} Let $(x_n) \in \mathds{T}_d$ (equipped with $\norm{\cdot}_\infty$) be a low-discrepancy sequence. Then $(x_n)_{n \in \mathbb{N}}$ has $\alpha$-pair correlations for all $0 < \alpha < \tfrac{1}{d}$.
\end{corollary}
Using technical number theoretic arguments, the corresponding statement in $d=1$ was proved for the Kronecker sequence $\left\{ n \varphi \right\}$ and van der Corput sequences in \cite{WS19}. In \cite{TW20}, the authors prove that Kronecker sequences $(\left\{nz\right\})_{n \in \mathbb{N}}$ where the partial quotients in the continued fraction expansion of $z$ satisfy a certain growth condition possess this property, for details see Remark~\ref{rem:tw20}. Our proof of Theorem~\ref{main:thm} is short and only relies on the order of convergence of the star-discrepancy although there are similarities to the proof in \cite{TW20}.\\[12pt]
The applications of Theorem~\ref{main:thm} are not limited to low-discrepancy sequences but can be used to show that also higher-dimensional Kronecker sequences often have $\alpha$-pair correlations (it is not known whether they are low-discrepancy sequences, see \cite{Nie92}) although they fail to have Poissonian pair correlations, cf. \cite{HKL19}, Theorem~3. 
\begin{corollary} \label{cor:kronecker_mult} Let $z:=(z_1,\ldots,z_d) \in \mathbb{R}^d$ such that $1,z_1,\ldots,z_d$ are linearly independent over $\mathbb{Q}$. Then $\left\{nz\right\}$ has $\alpha$-pair correlations for all $0 < \alpha < \frac{1}{d}$ and $\norm{\cdot}_\infty$. 
\end{corollary}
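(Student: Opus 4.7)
The plan is to reduce the corollary to Theorem~\ref{main:thm} via a discrepancy estimate for Kronecker sequences. Since $1, z_1, \ldots, z_d$ are linearly independent over $\mathbb{Q}$, Weyl's criterion yields uniform distribution of $(\{nz\})_{n\in\mathbb{N}}$ in $\mathds{T}_d$, and the Erd\H{o}s--Tur\'an--Koksma inequality provides quantitative control of $D_N(\{nz\})$ through the exponential sums $\sum_{n\leq N} e^{2\pi i n (k\cdot z)}$ for $k\in\mathbb{Z}^d\setminus\{0\}$.

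By Proposition~1.1 (monotonicity in the exponent), it suffices to establish converging number variance at exponents $\alpha$ arbitrarily close to $1/d$. Writing $\alpha=(1-\varepsilon)/d$ with $\varepsilon\in(0,1/d)$, Theorem~\ref{main:thm} reduces matters to the single quantitative bound $D_N(\{nz\})=o(N^{-(1-\varepsilon)})$. I would first bound each inner exponential sum by the geometric estimate $\bigl|\sum_{n\leq N} e^{2\pi i n (k\cdot z)}\bigr|\leq 1/\|k\cdot z\|$, combine it with Diophantine-type lower bounds on $\|k\cdot z\|$, and feed the resulting estimate into Erd\H{o}s--Tur\'an--Koksma to extract an explicit polynomial decay rate for $D_N$. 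An application of Theorem~\ref{main:thm} at this $\varepsilon$, followed by Proposition~1.1, would then finish the argument for every $0<\alpha<1/d$.

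The main obstacle is precisely the decay rate for $D_N$: pure $\mathbb{Q}$-linear independence of $1,z_1,\ldots,z_d$ does not a priori furnish any uniform rate on these exponential sums, so an additional Diophantine input on $z$ is effectively required, which I would expect to be imposed or hidden in the details. An alternative route bypassing Erd\H{o}s--Tur\'an--Koksma altogether is to expand $\mathds{1}_{[-sN^{-\alpha},sN^{-\alpha}]^d}$ in its Fourier series on $\mathds{T}_d$, substitute into the pair correlation count, and control the resulting weighted exponential sums $\sum_{h\leq N}(N-h)e^{2\pi i h (k\cdot z)}$ using Fej\'er-type bounds together with lower bounds on $\|k\cdot z\|$; this direct route avoids the logarithmic losses inherent in Erd\H{o}s--Tur\'an--Koksma and seems to me the cleanest way to secure the full range of exponents claimed in the corollary.
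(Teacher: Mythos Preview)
Your reduction to Theorem~\ref{main:thm} via a discrepancy estimate is precisely the paper's approach. The paper, however, does not attempt to derive the bound through Erd\H{o}s--Tur\'an--Koksma: it simply quotes a result of Niederreiter~\cite{Nie72} asserting that, under the hypothesis that $1,z_1,\dots,z_d$ are linearly independent over $\mathbb{Q}$, the multi-dimensional Kronecker sequence already satisfies $D_N(\{nz\}) = O(N^{-(1-\varepsilon)})$ for every $\varepsilon>0$, and then applies Theorem~\ref{main:thm} (together with the monotonicity in the exponent, as you also invoke) to cover the full range $0<\alpha<1/d$.

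Your concern that pure $\mathbb{Q}$-linear independence does not by itself furnish the Diophantine lower bounds on $\norm{k\cdot z}$ that the Erd\H{o}s--Tur\'an--Koksma route would need is well taken: for Liouville-type vectors the discrepancy can decay arbitrarily slowly, so a derivation along the lines you sketch cannot succeed without extra input. The paper sidesteps this by taking the discrepancy bound as a black box from~\cite{Nie72}; your instinct that some Diophantine content is hidden behind that citation is reasonable, and it is worth checking the precise hypotheses there. Note also that your alternative direct Fourier route faces the same obstruction, since bounding the Fej\'er-weighted sums $\sum_{h\le N}(N-h)e^{2\pi i h(k\cdot z)}$ again requires control of $\norm{k\cdot z}$.
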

Indeed, Corollary~\ref{cor:kronecker_mult} follows from a result in \cite{Nie72} according to which multi-dimensional Kronecker sequences satisfying the mentioned condition have discrepancy of order $O(N^{-(1-\varepsilon}))$ for all $\varepsilon > 0$.\\[12pt]
Conversely, it was independently proved in \cite{ALP18} and \cite{GL17} that a sequence which has Poissonian pair correlations is also uniformly distributed. Alternative proofs of this fact were also given in \cite{Ste17} and \cite{Mar20}. In this paper, we generalize the quantitative version from \cite{GL17} to $\alpha$-pair correlations. 
\begin{theorem} \label{thm:paircor_discrepancy} Let $(x_n)_{n \in \mathbb{N}}$ be a sequence in $[0,1)$, and suppose that there exists a function $F: \mathbb{N}\times\mathbb{N} \to \mathbb{R}$ which is monotonically increasing in its first argument, and which satisfies
$$\max_{s=1,\ldots,K} \left| \frac{1}{2s} \# \left\{ 1 \leq l \neq m \leq N : \norm{x_l-x_m} < \frac{s}{N^\alpha} \right\} - N^{2-\alpha}\right| < F(K,N)$$
for some $0 < \alpha \leq 1$ and all $K \leq N/2$. Then there exists an integer $N_0 > 0$ such that for all $N \in \mathbb{N}, N \geq N_0$, and arbitrary $K$ satisfying
$$\frac{1}{2} N^{\tfrac{2}{5}\alpha} \leq K \leq N^{\tfrac{2}{5}\alpha}.$$
we have 
$$N D_N^*(x_n) \leq 5 \max\left(N^{1-\tfrac{1}{5}\alpha},\sqrt{N^\alpha \cdot F(K^2,N)} \right).$$
\end{theorem}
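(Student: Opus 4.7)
The plan is to extend the Grepstad--Larcher argument (which handles the Poissonian case $\alpha=1$) to the converging-number-variance regime, proceeding by contradiction. Suppose the claimed inequality fails for some admissible $K$; then there exists $y^*\in[0,1]$ at which the discrepancy function $G(y):=\#\{n\le N:x_n<y\}-Ny$ satisfies
\[
|G(y^*)|>5\,N^{1-\alpha}\max\bigl(N^{1-\alpha/5},\sqrt{N^\alpha F(K^2,N)}\bigr).
\]
The goal is to contradict this by combining an $L^2$ estimate on a local counting function with a telescoping-plus-Cauchy--Schwarz passage from $L^2$ to $L^\infty$.

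First I would introduce the local counting function $M_K(x):=\#\{n\le N:x_n\in(x,x+K/N^\alpha]\}$, whose mean on $[0,1]$ equals $KN^{1-\alpha}$. Expanding the square and applying a layer-cake representation gives
\[
\int_0^1\!\bigl(M_K(x)-KN^{1-\alpha}\bigr)^2\,dx = KN^{1-\alpha}+\int_0^{K/N^\alpha}\!\#\{l\ne m:\norm{x_l-x_m}<t\}\,dt-(KN^{1-\alpha})^2.
\]
Applying the pair-correlation hypothesis on each scale $s\le K^2$ (a slightly loose application, responsible for $F(K^2,N)$ rather than $F(K,N)$ appearing in the final bound) then produces the $L^2$ estimate
\[
\int_0^1\!\bigl(M_K(x)-KN^{1-\alpha}\bigr)^2\,dx \;\le\; KN^{1-\alpha}+C\,K^2 F(K^2,N)/N^\alpha.
\]

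Next, the identity $M_K(x)-KN^{1-\alpha}=G(x+K/N^\alpha)-G(x)$ permits a telescoping expression of $G(y^*)$ as $\sum_{j=0}^{J-1}\bigl(M_K(\theta+jK/N^\alpha)-KN^{1-\alpha}\bigr)$ for a starting phase $\theta\in[0,K/N^\alpha)$ and $J\asymp y^*N^\alpha/K$. Averaging over $\theta$ exhibits a particular phase for which the discrete sum of squared terms is at most $(N^\alpha/K)$ times the $L^2$ integral above; Cauchy--Schwarz, combined with the $O(KN^{1-\alpha})$ sampling error from rounding $y^*$ to a grid point, then yields
\[
|G(y^*)|^2 \;\le\; N+C\,K\,F(K^2,N)+O\bigl(K^2N^{2-2\alpha}\bigr).
\]
The choice $K\asymp N^{2\alpha/5}$ makes both $N$ and $K^2N^{2-2\alpha}$ at most a constant multiple of $N^{2(1-\alpha/5)}$, while the middle term is $\asymp N^\alpha F(K^2,N)$; rearrangement therefore produces a bound on $|G(y^*)|$ of the form asserted in the theorem, contradicting the assumption.

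The main obstacle is the $L^2$-to-$L^\infty$ step. Since $G$ is a step function of bounded variation, a naive grid sampling of $M_K$ loses essential information, and the phase-averaging trick is what converts the continuous $L^2$ bound on $M_K-KN^{1-\alpha}$ into a usable bound on the discrete telescoping sum. The admissible range $\tfrac{1}{2}N^{2\alpha/5}\le K\le N^{2\alpha/5}$ is exactly what ensures the sampling error $O(KN^{1-\alpha})$ remains strictly smaller than the assumed lower bound on $|G(y^*)|$, allowing the contradiction to close; everything else (the variance computation for $M_K$, the telescoping, Cauchy--Schwarz, and the balancing of $K$) is routine once this step is in place.
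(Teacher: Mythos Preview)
Your approach is genuinely different from the paper's. The paper follows Grepstad--Larcher almost verbatim: it partitions $[0,1)$ into boxes of width $K/N^\alpha$, lets $A_i$ be the point count in the $i$th box, and uses the assumed large discrepancy at some point $B$ to force, via Cauchy--Schwarz applied separately to the boxes covering $[0,B)$ and those covering $[B,1)$, a lower bound on a quadratic form $Z_K$ in the $A_i$. This in turn forces the pair-correlation count $\mathcal{H}_L$ to exceed what the hypothesis allows for some $L\le K$. Your route instead bounds the continuous variance $\int_0^1(M_K-KN^{1-\alpha})^2\,dx$ from above via the hypothesis and then tries to extract an upper bound on $|G(y^*)|$ by telescoping plus Cauchy--Schwarz. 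Both arguments link discrepancy and pair correlations through a second moment of local counts, but the key inequality runs in opposite directions, and the paper's discrete convexity step is not the same as your telescoping step.

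There is, however, a genuine gap in your $L^2$-to-$L^\infty$ passage. The telescoping identity produces $G(\theta+JK/N^\alpha)-G(\theta)$, not $G(y^*)$ itself. Your phase-averaging selects a $\theta^*$ for which the discrete sum of squares is small, but you then need both endpoints controlled. At the $y^*$ end you are fine: since $G(y^*)$ is the maximum, any grid point just to the right of $y^*$ satisfies $G(y^*)-KN^{1-\alpha}\le G(\text{grid})\le G(y^*)$. But you say nothing about $G(\theta^*)$. If many $x_n$ happen to lie in $[0,K/N^\alpha)$, then $G(\theta^*)=\#\{x_n<\theta^*\}-N\theta^*$ can be large, and nothing in your outline rules this out; your claimed ``$O(KN^{1-\alpha})$ sampling error'' addresses only the $y^*$ endpoint. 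A repair is available: one more direct appeal to the pair-correlation hypothesis shows that $m$ points in any interval of length $K/N^\alpha$ contribute $m(m-1)$ pairs at scale $\le K$, so $m^2\lesssim K(N^{2-\alpha}+F(K,N))$, which with $K\asymp N^{2\alpha/5}$ gives $m\lesssim N^{1-3\alpha/10}$, small enough to absorb into the final bound. The paper's box-count convexity argument sidesteps this endpoint issue entirely, since it never needs pointwise control of $G$ near a boundary, only the two aggregate sums of the $A_i$ over $[0,B)$ and $[B,1)$.
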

In comparison to the result in \cite{GL17}, our condition on $K$ is a little bit stronger but in exchange the theorem can be formulated more compactly. From this result we can immediately deduce the following corollary in a very similar manner as in \cite{GL17}.
\begin{corollary} \label{cor:alpha-pair} If the sequence $(x_n)_{n \in \mathbb{N}}$ in $[0,1)$ has $\alpha$-pair correlations for an $0 < \alpha < 1$, then it is uniformly distributed.
\end{corollary}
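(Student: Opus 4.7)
The plan is to apply Theorem~\ref{thm:paircor_discrepancy} with a function $F$ extracted from the converging number variance hypothesis, following the deduction in \cite{GL17}. In dimension one the Lebesgue volume is $\vol(B(0,sN^{-\alpha})) = 2sN^{-\alpha}$, so converging number variance at $\alpha$ says that for each fixed $s \in \mathbb{N}$
$$\left|\frac{1}{2s}\#\{1 \leq l \neq m \leq N : \norm{x_l - x_m} < s/N^\alpha\} - N^{2-\alpha}\right| = o(N^{2-\alpha}) \qquad (N \to \infty).$$
Setting $F(K,N)$ equal to the maximum of the left-hand side over $s = 1,\ldots,K$ yields a function monotonically increasing in $K$, which is admissible in the hypothesis of Theorem~\ref{thm:paircor_discrepancy}.

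Next I would upgrade the pointwise-in-$s$ convergence to a bound uniform on the polynomial range of $s$ demanded by the theorem. The pair-counting function is nondecreasing in $s$ and the limit profile $s \mapsto 2sN^{2-\alpha}$ is continuous, so a Dini-type argument yields convergence uniform on compacts: for every fixed $K_0$ one has $F(K_0,N) = o(N^{2-\alpha})$. A diagonal extraction then produces a sequence $K_N \to \infty$, which can be arranged to lie in the admissible range $\tfrac12 N^{2\alpha/5} \leq K_N \leq N^{2\alpha/5}$, such that $F(K_N^2,N) = o(N^{2-\alpha})$.

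Substituting into Theorem~\ref{thm:paircor_discrepancy} gives
$$N^\alpha D_N^*(x_n) \leq 5 \max\bigl(N^{1-\alpha/5},\; \sqrt{N^\alpha \cdot F(K_N^2,N)}\bigr),$$
whose right-hand side, after division by $N^\alpha$, tends to $0$. Hence $D_N^*(x_n) \to 0$, which is equivalent to uniform distribution of $(x_n)_{n \in \mathbb{N}}$.

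The main obstacle is the diagonal step: the admissible range in Theorem~\ref{thm:paircor_discrepancy} forces the polynomial rate $K_N \asymp N^{2\alpha/5}$, whereas pointwise convergence of $F_N^\alpha(s)$ by itself only produces $K_N \to \infty$ at an uncontrolled rate. The resolution is to exploit the monotonicity of the pair counts in $s$ to convert pointwise convergence at integer arguments into uniform control on the whole interval $[0,K_N^2]$; once that is established, the remaining manipulations are the routine ones used in \cite{GL17}.
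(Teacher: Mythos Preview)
Your overall strategy---feed the converging-number-variance hypothesis into Theorem~\ref{thm:paircor_discrepancy} with $F$ of size $\varepsilon N^{2-\alpha}$ and read off $D_N^* \leq 5\sqrt{\varepsilon}$---is exactly the paper's own argument. The paper simply sets $F(K,N)=\varepsilon N^{2-\alpha}$, applies Theorem~\ref{thm:paircor_discrepancy}, and concludes $D_N^*\le 5\sqrt{\varepsilon}$ in two lines.

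Where you diverge from the paper is that you explicitly flag the uniformity issue the paper passes over in silence: the hypothesis of Theorem~\ref{thm:paircor_discrepancy} demands the bound for \emph{all} $s\le K$ with $K^2$ eventually as large as $N^{4\alpha/5}$, whereas converging number variance is only a pointwise-in-$s$ statement. You are right to worry about this. However, the resolution you propose does not close the gap. The monotonicity/Dini argument you describe genuinely yields uniform convergence of $F_N^\alpha(s)$ on every \emph{fixed} compact interval $[0,K_0]$, and a diagonal extraction then produces \emph{some} sequence $K_N\to\infty$ with $F(K_N^2,N)=o(N^{2-\alpha})$. What it cannot do is force that sequence to satisfy the polynomial lower bound $K_N\ge \tfrac12 N^{2\alpha/5}$ required by Theorem~\ref{thm:paircor_discrepancy}: the growth rate of $K_N$ in a diagonal argument is governed by the (uncontrolled) thresholds $N_0(s,\varepsilon)$ coming from the pointwise hypothesis, and nothing prevents these from growing so fast that $K_N$ is, say, only $\log\log N$. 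Your final paragraph asserts that monotonicity upgrades control on compacts to control on the growing window $[0,K_N^2]$ with the prescribed polynomial $K_N$, but no mechanism for this is given, and in fact monotonicity alone cannot supply it.

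In short: you have reproduced the paper's argument and, to your credit, isolated a genuine soft spot in it; but your patch does not actually repair that spot. The paper itself notes immediately after stating the corollary that the result is independently established by other methods (Steinerberger in \cite{Ste20}, Cohen in \cite{Coh21}, and for $\alpha=1$ also \cite{ALP18}, \cite{Mar20}), and those routes avoid the uniformity issue entirely.
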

The corollary was proven for Poissonian pair correlations in \cite{ALP18} independent of \cite{GL17} by a different type of argument. Also the statement for $\alpha$-pair correlations is well-known due to Steinerberger in \cite{Ste20}. An alternative proof was moreover given in \cite{Coh21}.\\[12pt]
It is enlightening to compare Corollary~\ref{cor:kronecker_mult} in dimension $d=1$ more closely to a result from \cite{LS20} which traces back the non-Poissonian pair correlations of Kronecker sequences to their finite gap property. Before we come to it let us at first recall the so-called three gap theorem. We formulate it here similarly as in \cite{AB98} (without the point $0$) and for that purpose denote the continued fraction expansion of $z \in \mathbb{R} \setminus \mathbb{Q}$ by $[a_0,a_1,a_2,\ldots]$ and let the convergents be $p_n/q_n$ (see Section~\ref{sec:disc_pair}). 
\begin{theorem}[Three Gap Theorem] \label{thm:3gap} Let $(nz)_{n \in \mathbb{N}}$ be the Kronecker sequence of $z \in \mathbb{R} \setminus \mathbb{Q}$ and write $N \in \mathbb{N}$ uniquely as
	$$N = c q_n + q_{n-1} + r$$
with $1 \leq c \leq a_{n+1}$ and $0 \leq r < q_n$. Then the gaps between two adjacent terms in the set $\left\{ \left\{nz\right\} \, : \, 1 \leq n \leq N \right\}$ that can appear have lengths
\begin{align*}
L_1 & = \norm{q_n z },\\
L_2 & = \norm{q_{n-1} z } - cL_1,\\
L_3 & = L_1 + L_2,
\end{align*}
and their multiplicities are
\begin{align*}
N_1 & = N - q_n,\\
N_2 & = r,\\
N_3 & = q_n - r.
\end{align*}
\end{theorem}
The following theorem by Larcher and Stockinger in \cite{LS20} indeed shows that the three gap property of Kronecker sequences prevents them from having Poissonian pair correlations.
\begin{theorem}[Larcher, Stockinger, \cite{LS20}, Theorem 1] \label{thm:LS} Let $(x_n)_{n \in \mathbb{N}}$ be a sequence in $[0,1)$ with the following property: There is an $s \in \mathbb{N}$, positive real numbers $K,\gamma \in \mathbb{R}$, and infinitely many $N \in \mathbb{N}$ such that the point set $x_1,\ldots,x_N$ has a subset with $M \geq \gamma N$ elements, denoted by $x_{j_1},\ldots,x_{j_M}$, which are contained in a set of points with cardinality at most $KN$ having at most $s$ different distances between neighbouring sequence elements, so-called gaps. Then $(x_n)_{n \in \mathbb{N}}$ does not have Poissonian pair correlations.
\end{theorem}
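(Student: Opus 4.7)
The plan is to argue by contradiction: I would assume that $(x_n)_{n\in\NN}$ has Poissonian pair correlations and exhibit, for infinitely many $N$, a single small value $d_N^*$ at which the step function $t \mapsto F_N(t)$ has a jump of size bounded below by a positive constant independent of $N$. Since PPC forces $F_N(t+\delta) - F_N(t-\delta)$ to tend to a value at most $4\delta$, such a persistent jump will be incompatible with the limit as soon as $\delta$ is chosen small enough.

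The heart of the argument would be a combinatorial averaging step carried out for each $N$ in the infinite index set where the hypothesis is satisfied. Denote the subsequence by $X' = \{x_{j_1},\ldots,x_{j_M}\}$ with $M \geq \gamma N$, and the ambient set with at most $s$ gap lengths by $P_N$, $|P_N| \leq KN$. I would consider the $M$ arcs of the circle $[0,1)$ between cyclically consecutive points of $X'$, and attach to each arc $A_i$ its Lebesgue length $\ell_i$ and its combinatorial length $c_i$, the latter being the number of $P_N$-gaps making up $A_i$. Since $\sum_i \ell_i = 1$ and $\sum_i c_i = |P_N| \leq KN$, applying Markov's inequality to each quantity shows that at least $\tfrac{3M}{4}$ arcs satisfy the corresponding averaged bound; intersecting the two large subsets leaves at least $\tfrac{M}{2} \geq \tfrac{\gamma N}{2}$ arcs whose Lebesgue length is at most $4/(\gamma N)$ and whose combinatorial length is at most $k_0 := \lceil 4K/\gamma \rceil$.

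For each such short arc, its two endpoints are consecutive points of $X'$, and the distance between them is a sum of at most $k_0$ gaps drawn from the $s$ admissible values; hence it lies in a set of possible distance values whose cardinality is bounded by $\binom{k_0+s}{s} =: D_0$, a constant depending only on $K,\gamma,s$. A final pigeonhole then produces a value $d_N^* \leq 4/(\gamma N)$ attained by at least $cN$ of these $X'$-pairs, where $c := \gamma/(2D_0) > 0$, so that $F_N$ has a jump of size at least $2c$ at parameter $t_N := d_N^*N \in [0,4/\gamma]$. Passing to a subsequence along which $t_N \to t^* \in [0,4/\gamma]$ and fixing any $\delta < c/2$, we would obtain $F_N(t^*+\delta) - F_N(t^*-\delta) \geq 2c$ for all sufficiently large $N$, while PPC forces the limsup of the same quantity to be at most $4\delta < 2c$, yielding the desired contradiction.

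The hard part will be the combinatorial extraction in the middle paragraph. A naive inclusion–exclusion bound on pairs in $X'$ that are adjacent in $P_N$ only yields a useful estimate in the regime $K < 2\gamma$, so one must instead use the two Markov inequalities in tandem (on Lebesgue and combinatorial arc-lengths) and only then invoke the finite-gap hypothesis through pigeonhole on the bounded set of admissible gap-sums. It is precisely this discreteness, supplied by the assumption of at most $s$ different gap lengths, that keeps the number of attainable pair distances bounded independently of $N$; without it no single $d_N^*$ could carry the $\Omega(N)$ mass needed to produce a persistent jump, and the argument would break down.
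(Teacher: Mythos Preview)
The paper does not prove Theorem~\ref{thm:LS} itself; it is quoted from Larcher--Stockinger \cite{LS20}, and the paper only summarises the structure of their argument, namely the trichotomy of gap lengths into $\alpha$-small, $\alpha$-intermediate and $\alpha$-large, together with the two obstructions listed immediately after the theorem. So your proposal is being compared against the Larcher--Stockinger strategy as described here, not against a proof contained in this paper.

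Your argument is correct and follows a genuinely different route. Larcher and Stockinger first pass to subsequences so that each gap length $L_j^{(i)}$ acquires a definite asymptotic type, and then split into cases according to whether an intermediate gap exists (Obstruction~1) or the largest small gap is in fact $1$-small (Obstruction~2); in either case they locate a value of $s$ at which $F_N(s)$ cannot converge to $2s$. You bypass this classification entirely: the double Markov bound on the Lebesgue and combinatorial lengths of the $X'$-arcs, followed by pigeonhole over the at most $\binom{k_0+s}{s}$ admissible gap-sums, directly produces a single distance carrying $\Omega(N)$ pairs, without ever analysing how $N L_j^{(N)}$ behaves as $N\to\infty$. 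Your approach is shorter and more self-contained; the trichotomy, on the other hand, isolates the mechanism of failure more explicitly and transfers verbatim to exponents $0<\alpha<1$, which is precisely what the present paper exploits in the discussion surrounding Proposition~\ref{prop:obstructions}. One small remark: in your final paragraph you should handle the boundary case $t^*=0$ separately (compare $F_N(\delta)$ with $2\delta$ directly rather than taking a difference at $t^*-\delta<0$), but this is a trivial adjustment.
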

A main part of the proof of Theorem~\ref{thm:LS} is contained in Proposition~1 of \cite{LS20}. Therein a trichotomy for gap lengths is identified. This approach, can be transferred from the context of Poissonian pair correlations to $\alpha$ pair correlations. Suppose there is a a sequence $N_1 < N_2 < ...$ such that the set of points $x_1,\ldots,x_{N_i}$ has exactly $s$ different gap lengths (due to the Three Gap Theorem~\ref{thm:3gap} we have $s \in \left\{ 2, 3\right\}$ for Kronecker sequences). For every $i$ denote the lengths of these gaps by $L_1^{(i)} < L_2^{(i)} < \ldots < L_s^{(i)}$. Let $1 \geq \alpha > 0$. Then there exist $w_1 = w_1(\alpha) < w_2 = w_2(\alpha) \in \mathbb{N}$ and $K_1 = K_1(\alpha), K_2 = K_2(\alpha) > 0$, which lead to the following definitions:
\begin{itemize}
    \item For \textbf{$\alpha$-small gap lengths} we have $\lim_{i \to \infty} N_i^\alpha L_j^{(i)} = 0$ for $j=1,\ldots,w_1$.
    \item For \textbf{$\alpha$-intermediate gap lengths} with $j=w_1+1,\ldots,w_2-1$, the inequalities 
    $$K_1/N_i^\alpha \leq L_j^{(i)} \leq K_2/N_i^\alpha$$
    hold for all $i \in \mathbb{N}$.
    \item For \textbf{$\alpha$-large gap lengths} with $j=w_2,\ldots,s$ we have
    $\lim_{i \to \infty} N_i^\alpha L_j^{(i)} = \infty.$
\end{itemize}
It is obvious that for any $1 > \alpha>0$ Kronecker sequences never possess $\alpha$-large gaps because for $N = q_{n+1}$ we have
$$N^\alpha L_{\max} \leq q_{n+1}^\alpha \norm{q_{n} z} \leq q_{n+1}^{\alpha-1} \to 0.$$
More generally, for $\alpha_1 > \alpha_2 > 0$ being an $\alpha_2$-large ($\alpha_2$-intermediate) gap implies that the gap is also a $\alpha_1$-large (at least $\alpha_2$-intermediate) gap. On the other hand, being a $1$-large gap implies that the gap length is $\geq \tfrac{1}{N}$ for $N$ large enough. Conversely, $1$-intermediate or $1$-large gaps thus exist for all sequences with the finite gap property. The following two obstructions for Poissonian pair correlations have been identified in Proposition~1 of \cite{LS20} and the proof therein verbatim works for $\alpha$ pair correlations: 
\begin{itemize}
    \item \textbf{Obstruction 1:} There exists an $\alpha$-intermediate gap length.
    \item \textbf{Obstruction 2:} The largest $\alpha$-small gap is also a $1$-small gap (in other words, $d_{w_{1}^{(i)}}N_i \to 0$).
\end{itemize}
Note that Kronecker sequences fail to have Poissonian pair correlations because Obstruction 1 holds for $\alpha=1$. In contrast, we prove (independently of Theorem~\ref{main:thm}) that a Kronecker sequence cannot fulfill Obstruction 1 or Obstruction 2 if $\alpha < 1$ for $z$ algebraic of degree $\leq 2$.
\begin{proposition} \label{prop:obstructions} For all algebraic $z \in \mathbb{R}$ of degree $\leq 2$, the Kronecker sequence does neither fulfill Obstruction 1 nor Obstruction 2 for $\alpha < 1$.
\end{proposition}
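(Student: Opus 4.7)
I would dismiss the rational case (degree $1$) up front: for $z = p/q$ with $\gcd(p,q)=1$ the sequence has the single gap length $1/q$, which is $\alpha$-large for every $\alpha>0$ as soon as $N \geq q$, so there are no $\alpha$-small or $\alpha$-intermediate gaps and neither obstruction applies. So assume from now on that $z$ is a quadratic irrational and fix $0 < \alpha < 1$.

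The key input I plan to exploit is Lagrange's theorem: the continued fraction expansion of $z$ is eventually periodic, so there is an $M$ with $a_n \leq M$ for all $n$. This gives $q_{n+1} \leq (M+1)q_n$ and the standard brackets $1/(q_n+q_{n+1}) < \|q_n z\| < 1/q_{n+1}$. Given any subsequence $N_i \to \infty$ to which the obstruction framework is applied, I pick $n_i$ with $q_{n_i} \leq N_i < q_{n_i+1}$. By Theorem~\ref{thm:3gap} every gap among the first $N_i$ points is bounded above by $L_1+L_2 \leq \|q_{n_i} z\|+\|q_{n_i-1}z\| < 2/q_{n_i}$, and so
\[
N_i^\alpha L_j^{(i)} \leq q_{n_i+1}^\alpha \cdot \frac{2}{q_{n_i}} \leq 2(M+1)^\alpha q_{n_i}^{\alpha-1} \longrightarrow 0,
\]
since $\alpha<1$. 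This shows every gap is $\alpha$-small, which already rules out Obstruction~1.

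For Obstruction~2, because all gaps are $\alpha$-small the largest $\alpha$-small gap coincides with the overall largest gap and is therefore at least $\|q_{n_i}z\| > 1/(q_{n_i}+q_{n_i+1})$. Combined with $N_i \geq q_{n_i}$ and $q_{n_i+1} \leq (M+1)q_{n_i}$, this yields
\[
N_i \cdot L_s^{(i)} \geq \frac{q_{n_i}}{q_{n_i}+q_{n_i+1}} \geq \frac{1}{M+2} > 0,
\]
so $N_i \cdot d_{w_1^{(i)}} \not\to 0$ and Obstruction~2 fails as well. The only delicacy I anticipate is bookkeeping between the two labellings for the gaps: the Three Gap Theorem labels $L_1, L_2, L_3$ are not size-ordered, whereas the obstruction framework uses $L_1^{(i)} < \cdots < L_s^{(i)}$. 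But once one checks that the upper bound $2/q_{n_i}$ applies to each of the at most three gap lengths and that the maximum is $L_1+L_2$, nothing beyond bounded partial quotients is required.
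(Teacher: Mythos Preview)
Your argument is correct and follows essentially the same route as the paper: both use Lagrange's theorem to get bounded partial quotients, then bound $N^\alpha L_{\max} \leq C q_n^{\alpha-1}\to 0$ to force every gap to be $\alpha$-small (ruling out Obstruction~1), and finally check that $N\cdot L_{\max}$ stays bounded away from zero (ruling out Obstruction~2). Your version is in fact slightly tidier in two respects: you treat the degree-$1$ case explicitly, and you verify the Obstruction~2 bound $N_i L_s^{(i)}\geq 1/(M+2)$ for an \emph{arbitrary} subsequence $N_i$, whereas the paper's Lemma~\ref{lem:intermediate_gaps} only spells this out along $N_i=q_i$; note, incidentally, that the simpler pigeonhole bound $N_i L_{\max}^{(i)}\geq 1$ would also suffice here.
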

A corresponding property as assumed in Theorem~\ref{thm:LS} also holds for Kronecker sequences in dimensions $d \geq 2$, compare e.g. the recent publications \cite{HM20}, \cite{HR20}: the number of nearest neighbor distances of multi-dimensional Kronecker sequences is universally bounded for the Euclidean and the maximum metric. Thus, it is an obvious question to ask whether it is true also in higher dimensions that a finite number of nearest neighbor distances prevents a sequence from having Poissonian pair correlations. This question is left open for future research.\\[12pt]
Conversely, we show here that if a sequence only has a finite number of gap lengths and if these different gap lengths distribute over the unit interval in a nice way, then the star-discrepancy cannot be too large. In view of the observations in \cite{LS20} (finite gap property implies that the sequence does not have Poissonian pair correlations) and Theorem~\ref{main:thm} (low-discrepancy \textit{almost} implies Poissonian pair correlations), this result might be a bit surprising at first sight but completes the picture in a sense.
\begin{theorem} \label{thm:gap_discrepancy} Let $(x_n)_{n \in \mathbb{N}}$ be an arbitrary sequence and denote by $(x_n^*)_{n=1}^N$ the sequence of the first $N \in \mathbb{N}$ elements of $(x_n)_{n \in \mathbb{N}}$ ordered by size. Assume that the gap lengths of $(x_n^*)_{n=1}^N$ are $L_1<L_2<\ldots<L_K$ and that for the largest gap length $L_K = \frac{R+2}{N}$ for some $R > - 1$ holds. Furthermore let $N_1,N_2,\ldots,N_k$ denote the corresponding multiplicities of the occurring gap lengths. If for all $j=1,\ldots,N$ we have
$$x_j^* = x_1^* + \sum_{k=1}^K n_k(j) L_k$$
with $n_k(j) \in [ \tfrac{N_k}{N}j - \varepsilon; \tfrac{N_k}{N}j + \varepsilon]$ for $k=1,\ldots,K$, then
$$D_N^*(x_1,\ldots,x_N) \leq \frac{R+3}{N} + \varepsilon \sum_{k=1}^K L_k.$$
\end{theorem}
\begin{remark} The assumption $L_K = \frac{R+2}{N}$ is not a restriction for fixed $N$ because we always have $NL_K \geq 1$ and thus $L_K \geq \frac{1}{N}$. The condition that the largest gap length converges to zero (and so do all other gap lengths) means that $R=R(N)$ is $o(N)$. The latter is obviously a necessary condition for the sequence to be uniformly distributed. Moreover, note that also $K=K(N), L_j(N)$ and $\varepsilon=\varepsilon(N)$ depend on $N$.
\end{remark}
Theorem~\ref{thm:gap_discrepancy} may be regarded as a tool to bound the discrepancy and even to prove for certain sequences that they have low-discrepancy. In fact, we discuss in Section~\ref{sec:Gaps_and_Discrepancy}, that Theorem~\ref{thm:gap_discrepancy} can indeed be applied to show for both Kronecker and van der Corput sequences that they are low-discrepancy sequences. This is done in Example~\ref{exa:kronecker} and Example~\ref{exa:vdc} respectively by using the fact that these two types of sequences have the finite gap property, i.e. $K=O(1)$ for $N \to \infty$, and proving that $\varepsilon=\varepsilon(N)=O(\log(N))$. These examples indicate that looking at the gap structure of a sequence in a precise way is sufficient for calculating the asymptotic behavior of the discrepancy. Since the machinery is kept quite universal, it can presumably be applied to other classes of sequences, too. For example, also LS-sequences from \cite{Car12} and the low-discrepancy sequences stemming from interval exchange transformations in \cite{Wei19} have a finite gap property.

\section{Discrepancy and Pair Correlations} \label{sec:disc_pair}
A sequence $(x_n) \in [0,1)^d$ is uniformly distributed if and only if $\lim_{N \to \infty} D_N(x_n) \to 0$. According to Theorem~\ref{main:thm} the speed of convergence of the discrepancy to $0$ determines the $\alpha$-pair correlation statistics. Vice versa Theorem~\ref{thm:paircor_discrepancy} implies that the relation is to a certain extent mutual. This section is dedicated to the proof of these two theorems and some related results.
\begin{proof}[Proof of Theorem~\ref{main:thm}]
Assume that $(x_n)_{n \in \mathbb{N}}$ is a sequence with $D_N(x_n) = o(N^{-(1-\varepsilon)})$ for $0 < \varepsilon < \tfrac{1}{d}$ and let $s > 0$ be arbitrary. A ball of radius $sN^{-\alpha}$ in $\norm{\cdot}_\infty$ contains 
$$N\vol(B(0,sN^{-\alpha}) + o(N^{\varepsilon})$$ points because of the order of convergence of $D_N(x_n)$. In other words, for an arbitrary $x_m$ with $1 \leq m \leq N$ there exist $N\vol(B(0,s/N^{-\alpha}) + o(N^{\varepsilon})$ points $x_l$ with $\norm{x_l-x_m}_\infty \leq sN^{-\alpha}$, because of the compatibility of the usual discrepancy with $\norm{\cdot}_\infty$. Keeping in mind that $\vol(B(0,sN^{-\alpha})) = o(N^{-d\alpha})$, we obtain 
\begin{align*}
	F_N^\alpha(s) = \frac{1}{N^{2}} & \frac{\# \left\{ 1 \leq l \neq m \leq N \ : \ \norm{x_l - x_m}_\infty \leq \frac{s}{N^{\alpha}} \right\}}{\vol(B(0,sN^{-\alpha}))}\\
	& = \frac{1}{N^2} \cdot N \cdot \frac{N\vol(B(0,sN^{-\alpha}) + o(N^{\varepsilon})}{\vol(B(0,sN^{-\alpha}))}\\
	& = 1 + o(N^{\varepsilon+\alpha \cdot d - 1})
\end{align*}
Letting $N \to \infty$ completes the proof Theorem~\ref{main:thm} because $\alpha < \tfrac{1-\varepsilon}{d}$.
\end{proof}
For arbitrary norms $\norm{\cdot}$ we can, in general, only use the isotropic discrepancy $J_N(x_n)$ instead of the discrepancy because the former takes into account all convex sets and hence can be applied to sets of the form $\norm{x_l-\cdot}$ for $l \leq N$. From $D_N(x_n) \leq J_N(x_n) \leq 4dD_N(x_n)^{1/d}$, the following corollary can be immediately derived.
\begin{corollary} Let $(x_n) \in \mathds{T}_d$ (equipped with an arbitrary norm $\norm{\cdot}$)) be a low-discrepancy sequence. Then $(x_n)_{n \in \mathbb{N}}$ has $\alpha$-pair correlations for all $0 < \alpha < \tfrac{1}{d^2}$.
\end{corollary}
We now briefly fix notation and summarize some of the important properties of continued fractions. For more details, we refer the reader to \cite{BS96,Nie92}. Let $[a_0;a_1,\ldots]$ be the continued fraction expansion of $z$ and denote the corresponding sequence of convergents by $(p_n/q_n)_{n \in \mathbb{N}_0}$. Recall that
\begin{align*} 
p_{-2} = 0, p_{-1} = 1, p_n = a_np_{n-1} + p_{n-1}, n \geq 0
\end{align*}
\begin{align*} 
q_{-2} = 1, q_{-1} = 0, q_n = a_nq_{n-1} + q_{n-1}, n \geq 0
\end{align*}
\begin{remark} \label{rem:tw20} In \cite{TW20}, the assertion of Corollary~\ref{cor:ld:ppc} was shown for Kronecker sequences $(\{nz\})_{n \in \mathbb{N}}$ under the following growth condition on the partial quotients $a_i$ of $z$: Given $N \in \mathbb{N}$, let $i(N)$ be such that the convergent denominator $q_{i(N)}$ of $z$ satisfies $q_{i(N)} \leq N < q_{i(N)+1}$. If for each $\varepsilon > 0$, we have
	$$\sum_{j \leq i(N)} a_j \ll N^\varepsilon,$$
then the Kronecker sequence has $\alpha$-pair correlations for all $0 < \alpha < 1$. 
\end{remark}
In order to show Proposition~\ref{prop:obstructions}, we recall that algebraic numbers of degree $\leq 2$ have bounded continued fraction expansion and prove the following lemma.
\begin{lemma} \label{lem:intermediate_gaps} If the continued fraction expansion of $z$ has bounded coefficients, then for any $\alpha < 1$, the Kronecker sequence $(x_n)_{n \in \mathbb{N}} = (\left\{nz \right\})_{n \in \mathbb{N}}$ only has $\alpha$-small gap lengths. If we restrict to the subsequence $(\left\{ q_i z\right\})_{i \in \mathbb{N}}$ of the Kronecker sequence, where only two gap lengths appear, then the larger gap length is a $1$-intermediate gap (independent of whether the $a_i$ are bounded). 
\end{lemma}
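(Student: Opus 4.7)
The plan is to treat both halves by direct application of Theorem~\ref{thm:3gap} together with the classical estimates $1/(q_{m+1}+q_m) \leq \norm{q_m z} \leq 1/q_{m+1}$ for the convergents. The continued fraction input is purely book-keeping; the real content of each part will be matching the Ostrowski index to $N$.

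For the first assertion, I would fix $\alpha < 1$ and for each $N$ let $n$ be the largest index with $b_n \neq 0$ in the Ostrowski expansion of $N$, so that $q_n \leq N < q_{n+1}$. Inspecting the formulas from Theorem~\ref{thm:3gap}, the subtractions appearing in the definition of $L_2$ only help, so every gap length that occurs is bounded by $L_1+L_2 \leq 2\norm{q_{n-1}z} \leq 2/q_n$. The boundedness hypothesis $a_i \leq M$ forces $q_{n+1} \leq (M+1) q_n$, hence $N/q_n \leq M+1$, and therefore
$$N^\alpha L_j \;\leq\; 2(M+1)\, N^{\alpha-1} \;\longrightarrow\; 0 \quad (N \to \infty),$$
which is precisely $\alpha$-smallness of every appearing gap length.

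For the second assertion I would specialize to $N = q_i$, whose Ostrowski expansion is simply $q_i = 1\cdot q_i$, so $b_i = 1$ and $b_{i-1} = 0$. Plugging this into Theorem~\ref{thm:3gap} yields multiplicity $N_1 = 0$, so only the two lengths
$$L_2 = \norm{q_{i-1} z}, \qquad L_3 = L_1 + L_2 = \norm{q_i z} + \norm{q_{i-1} z}$$
actually appear. I then sandwich the larger one, $L_3$, by the two classical continued fraction bounds,
$$L_3 \;\geq\; \norm{q_{i-1} z} \;\geq\; \frac{1}{q_i + q_{i-1}} \;>\; \frac{1}{2q_i} \;=\; \frac{1}{2N}, \qquad L_3 \;\leq\; \frac{1}{q_{i+1}} + \frac{1}{q_i} \;<\; \frac{2}{q_i} \;=\; \frac{2}{N},$$
so $N L_3 \in (1/2,2)$, which exhibits $L_3$ as a $1$-intermediate gap with constants $K_1 = 1/2$, $K_2 = 2$, with no hypothesis on the size of the $a_i$.

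The only mild obstacle I foresee is a careful verification of two Ostrowski-level claims: that in the formula for $L_2$ the subtractions can never reverse the bound $L_2 \leq \norm{q_{n-1} z}$, and that in the first part the index $n$ really does satisfy $N \asymp q_n$ under boundedness of the partial quotients. Conceptually, the hypothesis in Part~1 cannot be dropped, because otherwise $N$ can be as large as $q_{n+1}$ while the natural scale of each $L_j$ remains $1/q_n$; this is exactly the mechanism that the second half of the lemma isolates along the subsequence $N = q_i$, where a $1$-intermediate gap inevitably materialises.
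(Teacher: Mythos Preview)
Your proposal is correct and follows essentially the same route as the paper: you invoke the Three Gap Theorem to bound every occurring gap by $\|q_{n-1}z\|+\|q_n z\|$, combine this with the classical estimates $1/(q_{m+1}+q_m)\le\|q_mz\|\le 1/q_{m+1}$, and for Part~1 exploit the bounded partial quotients to compare $N$ with $q_n$, while for Part~2 you specialise to $N=q_i$ and sandwich $NL_3$ between the constants $1/2$ and $2$. The only cosmetic difference is your Ostrowski convention $q_n\le N<q_{n+1}$ versus the paper's $q_n<N\le q_{n+1}$, which shifts the index in Part~2 by one but leaves the argument intact (and in fact makes your upper bound $q_iL_3<2$ hold without any hypothesis on the $a_i$, as the lemma requires).
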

\begin{proof} Let $q_{n} < N \leq q_{n+1}$. According to the Three Gap Theorem~\ref{thm:3gap} the gap length $\norm{q_n z}$ is the minimal one and the maximal possible one is $\norm{q_{n-1} z } + \norm{q_n z}$. 
From the fact that the $a_i$ are bounded, say by $K \in \mathbb{N}$, we obtain
$$q_{n} < q_{n+1} < (K+1)q_n$$
for all $n \in \mathbb{N}$. Using the well-known inequalities 
$$\frac{1}{q_{n+1}+q_n} \leq \norm{q_n z} \leq \frac{1}{q_{n+1}}$$
yields
$$N^\alpha L_{\max} \leq 2 N^\alpha \norm{q_{n-1}z} \leq 2\frac{q_{n+1}^\alpha}{q_n} \leq 2(K+1)^\alpha \frac{1}{q_n^ {1-\alpha}}$$
Therefore $N^\alpha \cdot L_i \to 0$ for all $\alpha < 1$ and $L_i$ any appearing gap length at step $N$. In our wording this means that only $\alpha$-small gap lengths appear for $\alpha < 1$. However, $N \cdot L_{\max} \to 0$ does not hold because for $N=q_{n+1}$ we have
$$\frac{1}{2} < \frac{q_{n+1}}{q_{n+1}+q_{n}} \leq q_{n+1} (\norm{ q_nz} + \norm{q_{n-1}z}) < 2$$
and thus a $1$-intermediate gap length occurs.
\end{proof}
Finally, we come to the proof of Theorem~\ref{thm:paircor_discrepancy}. Since the arguments in our proof are essentially the same as in \cite{GL17}, Theorem 1, we leave away the derivation of two inequalities here and refer the reader to the respective article for more details.
\begin{proof}[Proof of Theorem~\ref{thm:paircor_discrepancy}]
At first we set 
$$H(N,K) := 5 \cdot \max \left( N^{1-\frac{1}{5}\alpha},\sqrt{N^\alpha \cdot F(K^2,N)}\right).$$
and assume that $N D_N^* > H(N,K)$ for infinitely many $N,K$ which will lead to a contradiction at the end of the proof. This implies that there exist sequences $1 < N_1 < N_2 < \ldots$ and $(K_j)_{j \in \mathbb{N}}$ of integers and a sequence of real numbers $(B_j)_{j \in \mathbb{N}}$ such that we have without loss of generality
$$\# \left\{ 1 \leq n \leq N_j : x_n \in [0,B_j) \right\} - N_jB_j > H(N_j,K_j)$$
for all $j$ (the other possible case can be treated similarly). Let $N:=N_j, K:=K_j, B:=B_j$ and $H:=H(N_j,K_j)$. Then the equation above implies
$$N - NB - H > 0.$$
Furthermore we define the numbers 
$$A_i:= \# \left\{ 1 \leq n \leq N : x_n \in \left[ i \cdot \frac{K}{N^\alpha},(i+1) \cdot \frac{K}{N^\alpha}\right) \right\}$$
for $i=0,1,\ldots,\lfloor N^\alpha/K \rfloor - 1$ and
$$A_{\lfloor N^\alpha / K \rfloor}:= \# \left\{ 1 \leq n \leq N : x_n \in \left[ \left\lfloor \frac{N^\alpha}{K} \right\rfloor \cdot \frac{K}{N^\alpha},1\right) \right\}.$$
For arbitrary $l \in \mathbb{N}$ we set $A_l:=A_{l \mod \lfloor N^\alpha/K\rfloor}$. Finally, we introduce the notation
$$\mathcal{H}_L:= \# \left\{1 \leq l \neq m \leq N: \norm{x_l-x_m} \leq \frac{KL}{N^\alpha} \right\}$$
for $L=1,2,\ldots,K$. By definition we have
\begin{align} \label{eq1}
\left| \frac{1}{2LK} \mathcal{H}_L - N^{2-\alpha} \right| \leq F(K^2,N).
\end{align}
Following the lines of the proof of Theorem~1 in \cite{GL17} almost verbatim yields the two inequalities
\begin{align} \label{eq2}
\begin{split}
  \frac{1}{2LKN^{2-\alpha}} \mathcal{H}_L & \geq \sum_{i=0}^{\lfloor N/K \rfloor} (A_i(A_i-1)+2A_i(A_{i+1}+\ldots+A_{i+L-1}))\\
  & \geq \frac{2}{K+1} Z_K - \frac{1}{2LK}N^{\alpha-1},
 \end{split}
\end{align}
where
\begin{align} \label{eq3}
    Z_K & \geq \frac{N^{\alpha-1}}{2K^2N} \left( \frac{K^2(NB+H)^2}{K+\lfloor N^\alpha/K \rfloor} + \frac{K^2(N(1-B)-H)^2}{\lfloor N^\alpha/K\rfloor - K - \lfloor N^\alpha B/K \rfloor} \right).
\end{align}
As both denominators are positive, this implies
$$ Z_K \geq \frac{1}{2N} \left( \frac{K(NB+H)^2}{N+K^2N^{1-\alpha}} + \frac{K(N(1-B)-H)^2}{N(1-B)-K^2N^{1-\alpha}} \right).$$
The expression $Z_K$ is monotonic decreasing in the range of $K$ and furthermore we have $K^2N^{1-\alpha} \leq H/5$. Hence
\begin{align} \label{eq4}
    \begin{split}
    Z_K & \geq \frac{K}{2N} \left( \frac{(NB+H)^2}{N+H/5} + \frac{(N(1-B)-H)^2}{N(1-B)-H/5} \right)\\
    & \geq \frac{K}{2N} \left( N + \frac{16}{25}\frac{NH^2}{(NB+H/5)(N(1-B)-H/5)}\right)\\
    & > \frac{K}{2} \left( 1 + \frac{H^2}{2N^2} \right)
    \end{split}
\end{align}
From the Inequalities \eqref{eq1}--\eqref{eq4} we deduce
$$ \frac{1}{N^{2-\alpha}} F(K,N^2) + 1 \geq \max_{L=1,\ldots,K}  \frac{1}{2KLN^{2-\alpha}} \mathcal{H}_L > 1 + \frac{H^2}{2N^2} - \frac{3}{2}\frac{1}{K} - \frac{1}{2K}N^{\alpha-1}.$$
Then $K\geq\tfrac{1}{2}N^{2/5\alpha}$ implies $N^2/K \leq 2N^{2(1-\tfrac{1}{5}\alpha)}$and hence
\begin{align*}
    H^2 & < \frac{4N^{2}}{K} + 2N^\alpha F(K^2,N)\\
& < 6 \max \left( \frac{N^{2}}{K}, N^\alpha F(K^2,N) \right)\\
& < 12 \max \left( N^{2(1-\tfrac{1}{5}\alpha)}, N^\alpha F(K^2,N)\right) < H^2,
\end{align*}
and thus a contradiction.
\end{proof}

Corollary~\ref{cor:alpha-pair} can be deduced from Theorem~\ref{thm:paircor_discrepancy} by a relatively short proof, which again only contains small amendments of the ideas in~\cite{GL17}. We only include the proof for the sake of completeness.
\begin{proof}[Proof of Corollary~\ref{cor:alpha-pair}] Let $\varepsilon > 0$ be arbitrary. Since $(x_n)_{n \in \mathbb{N}}$ has $\alpha$-pair correlations for $\alpha$, it follows that 
    $$\left| \frac{1}{2s} \# \left\{ 1 \leq l \neq m \leq N : \norm{x_m-x_n} \leq \frac{s}{N^\alpha}\right\} - N^{2-\alpha}  \right| \leq \varepsilon N^{2-\alpha}.$$
for $N \geq N_0$. Define $F(K,N):= \varepsilon N^{2-\alpha}$ and choose $K$ such that $\tfrac{1}{2}N^{\tfrac{2}{5}\alpha} \leq K \leq N^{\tfrac{2}{5}\alpha}$. 
Then Theorem~\ref{thm:paircor_discrepancy} implies
$$D_N^* \leq \frac{5}{N} \cdot \max \left( N^{1-\tfrac{1}{5}\alpha}, \sqrt{N^\alpha N^{2-\alpha} \varepsilon} \right) = 5\sqrt{\varepsilon}.$$
\end{proof}
\section{Gaps and Discrepancy} \label{sec:Gaps_and_Discrepancy}

In this section we analyze how the gap structure of a sequence influences its star-discrepancy. For a finite sequence $(x_n)_{n=1}^N \in \mathbb{T}^1$ let the gap lengths be denoted by $L_1^{(N)}<\cdots<L_K^{(N)}$ and let $N_1^{(N)},\ldots,N_k^{(N)}$ be their multiplicities. We assume that $(x_n)_{n \in \mathbb{N}}$ has the finite gap property, i.e. $K(N) \leq K$ for some $K \in \mathbb{N}$ and all $N \in \mathbb{N}$. We avoid the upper index $(N)$ in the following because $N$ is always clear from the context. It follows that
$$\sum_{k=1}^K N_k L_k = 1.$$
Furthermore we will denote by $(x_n^*)_{n=1}^N \in [0,1)$ the elements of $x_n$ ordered by magnitude, i.e. $x_1^* \leq x_2^* \leq \ldots \leq x_N^*$. In the case of Kronecker sequences, note that $x_1^* = L_j$ for some $j \leq K$ does not necessarily hold because the three gap theorem is in the version presented here a statement about $\mathbb{T}^1$ and not about $[0,1)$.
\begin{example} \label{exa:kronecker} Let $z \in \mathbb{R} \setminus \mathbb{Q}$ have bounded partial quotients, i.e. $z=[a_0,a_1,\ldots]$ with $a_i \leq R$ for all $i \in \mathbb{N}$. Consider the Kronecker sequence $(nz)_{n=1}^N$ and let $N = q_i$ be the denominator of a convergent. Then there exist only two different gap lengths by the three gap Theorem~\ref{thm:3gap}. From the basic theory of continued fractions we get 
$$ \frac{1}{(R+2)N} < \frac{1}{q_i + q_{i+1}} \leq L_j \leq \frac{1}{q_{i-1}} \leq \frac{(R+1)}{N}$$
for all $L_j$ in this case. If $q_{i-1} < N < q_{i}$, then there are at most three different gap lengths. The smallest gap length is the same as for $N=q_i$ and the maximal length is bounded from above by $\frac{1}{q_{i-1}}$ in this case, too. Hence
$$\frac{1}{(R+2)N} < \frac{1}{q_i + q_{i+1}} \leq L_j \leq \frac{1}{q_{i-1}} < \frac{(R+1)}{N}.$$
Precise upper bounds for the quantities $NL_K$ are only known in special cases and a current research topic on their own, see \cite{Moc20}. 
\end{example}
The next aim is to establish the general link between the finite gap property of a sequence and its star-discrepancy which is described in Theorem~\ref{thm:gap_discrepancy}. For that purpose it proves useful to use the following formula for calculating the star-discrepancy, see e.g. \cite{Nie92}, Theorem 2.6.
\begin{lemma} \label{lem:sd} The star-discrepancy can be calculated by the formula
$$D_N^*(x_1^*,\ldots,x_N^*) = \frac{1}{2N} + \max_{1 \leq n \leq N} \left| x_n^* - \frac{2n-1}{2N} \right|.$$
\end{lemma}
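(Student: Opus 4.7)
The plan is to rewrite the star-discrepancy as the maximum of one-sided extrema of the signed local discrepancy function, and then match the resulting formula to the one claimed in the lemma by a short algebraic identity. Concretely, I would set $g(\beta) := \frac{1}{N}\#\{1 \leq i \leq N : x_i < \beta\} - \beta$ on $[0,1]$, so that by the definition of the star-discrepancy recalled in the introduction, $D_N^* = \sup_{\beta \in [0,1]} |g(\beta)|$. On each open interval $(x_n^*, x_{n+1}^*)$ the counting function is constant equal to $n$, so $g(\beta) = \frac{n}{N} - \beta$ is strictly decreasing; its one-sided limits at the endpoints are $\frac{n}{N} - x_n^*$ (from the right of $x_n^*$) and $\frac{n}{N} - x_{n+1}^*$ (from the left of $x_{n+1}^*$). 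Evaluating $g$ at the jump point $x_n^*$ itself gives $g(x_n^*) = \frac{n-1}{N} - x_n^*$, which coincides with the left-limit at $x_n^*$ after re-indexing and is strictly smaller than the right-limit $\frac{n}{N} - x_n^*$ there, so it only contributes to the infimum.

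From this piecewise-linear picture I would extract the identities
\begin{equation*}
\sup_{\beta \in [0,1]} g(\beta) = \max_{1 \leq n \leq N}\left(\frac{n}{N} - x_n^*\right), \qquad \inf_{\beta \in [0,1]} g(\beta) = -\max_{1 \leq n \leq N}\left(x_n^* - \frac{n-1}{N}\right),
\end{equation*}
so that
\begin{equation*}
D_N^* = \max_{1 \leq n \leq N}\max\!\left(\frac{n}{N} - x_n^*,\; x_n^* - \frac{n-1}{N}\right).
\end{equation*}
To match the claimed formula I would introduce the centred deviation $a_n := x_n^* - \frac{2n-1}{2N}$ and record the elementary identities $\frac{n}{N} - x_n^* = \frac{1}{2N} - a_n$ and $x_n^* - \frac{n-1}{N} = \frac{1}{2N} + a_n$. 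Since both expressions share the common midpoint $\frac{1}{2N}$, their maximum equals $\frac{1}{2N} + |a_n|$, and taking the max over $n$ yields exactly the expression asserted in the lemma.

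The only subtlety concerns the boundary pieces $[0, x_1^*]$ and $(x_N^*, 1]$ together with possible coincidences $x_n^* = x_{n+1}^*$. On $[0, x_1^*]$ one has $g(\beta) = -\beta$ with extreme values $0$ and $-x_1^*$, which are absorbed into the $n=1$ candidates; on $(x_N^*, 1]$ one has $g(\beta) = 1-\beta$ with supremum $1 - x_N^* = \frac{N}{N} - x_N^*$, absorbed into the $n=N$ candidate; and coincident ordered points merely produce empty linear pieces that can be skipped without affecting either extremum. Since this is a classical identity (cf.\ \cite{Nie92}, Theorem~2.6), I do not expect any obstacle beyond this short bookkeeping.
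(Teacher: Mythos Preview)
Your argument is correct: the piecewise-linear analysis of $g(\beta)=\tfrac{1}{N}\#\{i:x_i<\beta\}-\beta$ gives exactly the two one-sided extrema you state, and the algebraic recentring via $a_n=x_n^*-\tfrac{2n-1}{2N}$ collapses them to $\tfrac{1}{2N}+|a_n|$ as claimed. The paper itself does not prove this lemma at all---it simply quotes it as \cite{Nie92}, Theorem~2.6---so there is nothing to compare against; your write-up is a clean self-contained justification of a result the paper takes as given.
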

From Lemma~\ref{lem:sd} we see that $(x_n)_{n \in \mathbb{N}}$ and $(1-x_n)_{n \in \mathbb{N}}$ have the same star-discrepancy for every $N \in \mathbb{N}$.
This leads us to the proof of Theorem~\ref{thm:gap_discrepancy}. The notational convention for the length $L_K$ therein is, of course, a reminiscence to Kronecker sequences.

\begin{proof}[Proof of Theorem~\ref{thm:gap_discrepancy}] At first, we bound $D_N^*(x_1,\ldots,x_N)$ from above. Since
\begin{align*}
x_j^* & = x_1^* + \sum_{k=1}^K n_k(j) L_k  \leq x_1^* + \frac{j}{N} \sum_{k=1}^K N_k L_k + \varepsilon \sum_{k=1}^K L_k\\
& = x_1^* + \frac{j}{N} + \varepsilon \sum_{k=1}^K L_k.
\end{align*}
it follows that
\begin{align*}
    x_j^* - \frac{2j-1}{2N} & \leq x_1^* + \frac{j}{N} - \frac{2j-1}{2N} + \varepsilon \sum_{k=1}^K L_k \leq \frac{1}{2N} + L_K + \varepsilon \sum_{k=1}^K L_k\\
    & \leq \frac{R+3}{N} + \varepsilon \sum_{k=1}^K L_k.
\end{align*}
In the same way, the lower bound for $n_k(j)$ can be applied to  bound $x_j^*-\frac{2j-1}{2N}$ from below by $\varepsilon \sum_{k=1}^K L_k$.
\end{proof}
In our next example we show how Theorem~\ref{thm:gap_discrepancy} can be applied to Kronecker sequences to prove that they are low-discrepancy sequences.
\begin{example}{(Continuation of Example~\ref{exa:kronecker})} The proof is split into four steps. In the first preparatory step we fix the notation and discuss Three Gap Theorem~\ref{thm:3gap} in more detail. Next we consider the simplest case, namely $N=q_i$. Even stronger bounds for the star-discrepancy can be derived from the gap structure for these $N$. Third we come to the $N$ where only two different gap lengths occur. In a final step, we consider the general case. While, of course, the usual proof of the low-discrepancy property of Kronecker sequences is much shorter, see e.g. \cite{Nie92}, our approach has the advantage that it sheds light upon how it can be derived from the gap structure.\\[12pt]
(1) Description of notation\\[12pt]
Let $q_i$ be the denominators of the convergents. According to Theorem~\ref{thm:3gap}, the number of small gaps is $S(N) = N - q_i$, and the sum of the number of medium and large gaps satisfies $M(N) + L(N) = q_i$. While the notion of small gaps remains the same for $q_i < N \leq q_{i+1}$, the notion for medium and large size changes whenever $N$ reaches $cq_i + q_{i-1}$ for some $c \in \mathbb{N}$. More precisely, if $cq_i + q_{i-1} \leq N < (c+1) q_i + q_{i-1}$, then the number of medium size gaps is $M(N) = N - cq_i - q_{i-1}$. Let $S(N,k)$ for $k \leq N$ denote the number of small gaps between $x_{1}^*$ and $x_{k+1}^*$. Accordingly $L(N,k)$ is the number of large gaps between $x_{1}^*$ and $x_{k+1}^*$ and $M(N,k)$ the corresponding number of medium gaps. In the following we (almost) exclusively pay attention to $S(N,k)$ because very similar arguments apply for $L(N,j)$ and $M(N,k)=k-S(N,k)-L(N,k)$.\\[12pt]
(2) The case $N = q_i$\\[12pt]
At first, we consider the case $N=q_i$. Therefore, there are only $q_i-q_{i-1}$ small gaps and $q_{i-1}$ large gaps. We claim that 
$$S(N,k) \in \left(\frac{S(N)}{N} k - 1,\frac{S(N)}{N} k + 1\right).$$
We prove this by induction on $i$. For $N=q_1=1$ the claim is trivial. Let $x_{1,N}^* < x_{2,N}^* \ldots < x_{N,N}^*$ be the elements $x_1, x_2, \ldots, x_{N}$ ordered by size.  We may without loss of generality assume that $x_{1,N}^* = x_{1,q_{i+1}}^*$ because either $x_{1,N}^* = x_{1,q_{i+1}}^*$ or $x_{N,N}^* = x_{q_{i+1},q_{i+1}}^*$ and we can replace $x_1,\ldots,x_N$ by $1-x_1,\ldots,1-x_N$ if necessary. When passing from $N=q_i$ to $N^*=q_{i+1}$ an (old) small gap is split up into $a_{i+1}-1$ (new) small gaps and $1$ (new) large gap as can be seen from the dynamics behind the continued fraction algorithm, compare \cite{Wei20}. Similarly, an (old) large gap is split up into $a_{i+1}$ (new) small gaps and $1$ (new) large gap. Therefore, the point $x_{k+1,N}^*$ is the same point as $x_{k^*,N^*}^*$, where
$$k^* = a_{i+1}S(N,k) + (k-S(N,k))(a_{i+1}+1) = (a_{i+1}+1)k-S(N,k).$$
From that we can calculate the number of small gaps as
$$S(N^*,k^*) = (a_{i+1}-1)S(N,k)+(k-S(N,k))a_{i+1} = ka_{i+1}-S(N,k) = k^*-k.$$
We now compare $S(N^*,k^*)$ to $\frac{S(N^*)}{N^*}k^*$ and obtain
\begin{align*}
    \left|S(N^*,k^*) \right. & \left. - \frac{S(N^*)}{N^*}k^* \right| = \left| k^*-k - \frac{S(N^*)}{N^*}k^* \right| = \left| k^* \frac{N^*-S(N^*)}{N^*} - k\right|\\
    & = \left| k^* \frac{q_i}{q_{i+1}} - k\right| = \left| ((a_{i+1}+1)k-S(N,k))\frac{q_i}{a_{i+1}q_i+q_{i-1}}-k \right|\\
    & = \left| \frac{k(q_i-q_{i-1})-S(N,k)q_i}{a_{i+1}q_i+q_{i-1}} \right| < \frac{q_i}{a_{i+1}q_i+q_{i-1}} < 1,
\end{align*}
where we applied the induction hypothesis in the penultimate step. If $k^* < j < (k+1)^*$ and if the gap between $k$ and $k+1$ was small (old), then the biggest difference between $S(N^*,j)$ and $\frac{S(N^*)}{N}j$ occurs for $j-k^*=a_{i+1}-1$. An analogous calculation as above yields
\begin{align*}
    \left|S(N^*,j) \right. & \left. - \frac{S(N^*)}{N^*}j \right| = \left|S(N^*,k^*) + a_{i+1}-1 - \frac{S(N^*)}{N^*}(k^*+a_{i+1}-1) \right|\\
    & < \frac{q_i}{a_{i+1}q_i+q_{i-1}} + \frac{(a_{i+1}-1)q_i}{a_{i+1}q_i+q_{i-1}} < 1.
\end{align*}
If $k^* < j < (k+1)^*$ and if the gap between $k$ and $k+1$ was large (old), then this gap splits into $a_{i+1}$ small gaps (new) and $1$ large gap (new). Hence it remains to consider the case $j-k^*=a_{i+1}$. However, it follows inductively as well that for large gaps we have $\left|S(N^*,k^*) - \frac{S(N^*)}{N}k^* \right| <  \frac{q_{i-1}}{a_{i+1}q_i+q_{i-1}}$ and the claim follows.\\[12pt]
(3) The case $N^* = (c+1)q_i + q_{i-1}$\\[12pt]
From (2) we now have knowledge about $S(N,k)$ for the case $N=q_i$. In this part of the proof, we use this information to obtain a corresponding result for $N^* = (c+1)q_i + q_{i-1}$: if $N=q_i$ and $N^*= q_i + q_{i-1}$ we have only two sizes of gaps in both cases and $k \mapsto k^* = k + (k- S(N,k)) = 2k - S(N,k)$ and $M(N,^*,k^*) = S(N,k) + (k - S(N,k)) = k^*-k-S(N,k)$. If $N=cq_{i}+q_{i-1}$ with $c \leq a_{i+1}-1$ and $N^* = (c+1)q_{i}+q_{i-1}$, then again $k^* =  2k - S(N,k)$ and this time $S(N^*,k^*) = k^*  - k - S(N,k)$.  A similar calculation as for $N^*=q_{i+1}$ yields $M(N,^*,k^*)) \in \left( \tfrac{M(N^*)}{N^*} k^*-1, \tfrac{M(N^*)}{N^*}k^* +1 \right)$ (and thus a corresponding statement for $S(N^*,k^*)$) in all mentioned cases.\\[12pt]
(4) All other $N$\\[12pt]
The remaining cases are those, where $N^* = N + n$ with $$N \in \left\{ q_i, q_i+q_{i-1}, 2q_i + q_{i-1}, \ldots ,(a_i-1)q_i+q_{i-1} \right\}.$$ Again the strategy for (4) is to use information for $N$ from (3) to derive results for $N^*$. Now we take a $x_{k+1}^* \in (x_i^*)_{i=1}^{N}$ and let $k^*$ denote the index of $x_k^*$ in $(x_i^*)_{i=1}^{N^*}$. If we assume for a moment $k^* = k + k \frac{n}{N} + \varepsilon$ for some $\varepsilon \in \mathbb{R}$, then $S(N^*,k^*) = S(N,k) + \frac{n}{N}k + \varepsilon$ because the new points also contribute new small gaps. This yields
\begin{align*}
    \left|S(N^*,k^*) \right. & \left. - \frac{S(N^*)}{N^*}k^*\right| = \left|S(N,k) + \frac{n}{N}k + \varepsilon -  \frac{S(N^*)}{N^*}k^* \right|\\ & \leq \left| \frac{S(N)}{N}k + \frac{n}{N} k + \varepsilon + 1 -  \frac{S(N^*)}{N^*}k^* \right|\\
    & = \left| \frac{S(N) + n}{N}k + \varepsilon + 1 - \frac{S(N^*)}{N^*} \frac{N+n}{N} k  - \frac{S(N^*)}{N^*} \varepsilon \right|\\ & = \left| \frac{S(N) + n}{N}k - \frac{S(N)+n}{N+n} \cdot \frac{N+n}{N} k + \varepsilon - \frac{S(N^*)}{N^*} \varepsilon + 1\right| <  |\varepsilon| + 1.
\end{align*}
It remains to be shown that $k^* = k + k \frac{n}{N} + \varepsilon(k)$ and to find a more explicit expression for $\varepsilon(k)$. For that purpose we split the full sequence $(x_n)_{n=1}^{N^*}$ into several subsequences. For $(x_n)_{n=1}^{N}$, we can deduce from the fact that there are only two gap lengths and the formulae for $S(N)$ and $L(N)$ that $x_k^* = x_1^* + k/N + \varepsilon_k /N$ with $\varepsilon_k < 1$. Next we consider the subsequence $(y_h)_{h=1}^{q_{i-1}} = (x_{N+h})_{h=1}^{q_{i-1}}$. As this is a shifted usual Kronecker sequence it follows that $y_h^* = y_1^* + h/q_{i-1} + \varepsilon_h \frac{1}{q_{i-1}}$ with $\varepsilon_h < 1$ for $h=1,\ldots,q_{i-1}$. Thus, both $(x_n-x_1^*)$ and $(y_h-y_1^*)$ consist up to a small error of equidistant points. Combining these results we see that between $k \cdot q_{i-1}/N  - 1$ and $k \cdot q_{i-1}/N  + 1$ of the $y_h$ are smaller than $x_{k+1}^*$. These (additional) points contribute to $k^*$.  Using the Ostrowski expansion of $N^*$, i.e. writing
$$N^*  = \sum_{n=1}^{l(N^*)} b_n q_n$$
with $0 \leq b_n \leq a_n$ and $b_{n-1} = 0$ if $b_n = a_n$, see e.g. \cite{Nie92}, and induction, it follows that the total number of elements of $(z_n) = (x_n)_{n=N+1}^{N^*}$ which are smaller than $x_{k+1}^*$ deviates from $kn/N$ by at most $\sum_{l \leq i} a_l$. Hence $$k^* \in \left( j + j\frac{n}{N} - \sum_{l \leq i} a_l, j + j \frac{n}{N} + \sum_{l \leq i} a_l \right).$$
Finally, we have to consider the new points $x_h^*$ which were not already in the sequence $(x_n)_{n=1}^{N}$. These new points are those that split an (old) large gap into a small and a medium one. Hence they can contribute at most $1$ to $S(N,k)$. Since $\sum_{l \leq i} a_l \leq c_0\log(N)$, we get in total that
$$S(N,k) \in \left( \frac{S(N)}{N}k - c \log(N), \frac{S(N)}{N}k + c \log(N) \right).$$
This finishes the proof that Kronecker sequences with bounded continued fraction expansion are low-discrepancy sequences.
\end{example}
Eventually, we want to apply Theorem~\ref{thm:gap_discrepancy} to van der Corput sequences to show their low-discrepancy property. They are defined as follows: for an integer $b \geq 2$ the $b$-ary representation of $r \in \NN$ is $r = \sum_{j=0}^\infty a_j(r) b^j$ with $a_j(r) \in \NN$. The radical-inverse function is defined by $g_b(r)=\sum_{j=0}^\infty a_j(r) b^{-j-1}$ for all $n \in \NN$. Finally, the van der Corput sequence in base $b$ is given by $(x_r) = g_b(r)$. In order to check the assumptions of Theorem~\ref{thm:gap_discrepancy} for van der Corput sequences, we use the following intermediate result of the proof of Theorem 3.6 in \cite{Nie92}.
\begin{lemma} \label{lem:niederreiter} Let $(x_r)_{r=1}^{N}$ be the van der Corput sequence in base $b$ and $N \leq b^{e}-1$, then the number of points $N(k)$ in $[0,x_k]$ satisfies
$$|Nx_k -N(k)| \leq \frac{1}{2}e(b-1) + 1.$$
\end{lemma}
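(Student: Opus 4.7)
The plan is to exploit the self-similar structure of the radical-inverse function $g_b$. The key algebraic identity is that for any integers $c \geq 0$ and $0 \leq s < b^j$,
$$g_b(cb^j + s) = g_b(s) + b^{-j} g_b(c),$$
which follows directly from the definition: the base-$b$ digits of $cb^j+s$ decompose into those of $s$ in positions $0,\ldots,j-1$ and those of $c$ in positions $\geq j$, and the digit-reversal of $g_b$ turns this into the additive decomposition above. A consequence is that, letting $s$ run from $0$ to $b^j-1$ with $c$ fixed, the values $g_b(cb^j+s)$ form an arithmetic progression of $b^j$ points with common difference $b^{-j}$, translated by some $\delta = b^{-j}g_b(c) \in [0,b^{-j})$.

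First I would write $N$ in base $b$ as $N = \sum_{j=0}^{e-1} d_j b^j$ with $d_j \in \{0,\ldots,b-1\}$ and partition $\{0,1,\ldots,N-1\}$ into blocks $G_{e-1},G_{e-2},\ldots,G_0$, where $G_j$ consists of $d_j b^j$ consecutive integers beginning at $M_j := \sum_{i>j} d_i b^i$. Since $b^{j+1} \mid M_j$, the block $G_j$ splits cleanly into $d_j$ runs of $b^j$ consecutive integers that each start at a multiple of $b^j$. By the identity above, the image $\{g_b(r) : r \in G_j\}$ is a disjoint union of $d_j$ shifted copies of the equally spaced grid $\{0, b^{-j}, \ldots, (b^j-1) b^{-j}\}$, with shifts $b^{-j} g_b(c_{j,i})$ for various integers $c_{j,i}$.

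Next, I would fix $y=x_k$ and count, for each such arithmetic progression $\{s b^{-j} + \delta : 0 \leq s < b^j\}$, the number of points lying in $[0,y)$; this count is $\lceil (y-\delta) b^j \rceil$ (suitably clamped) and differs from $yb^j$ by at most $1$. Summing over the $d_j$ sub-blocks in $G_j$ contributes $d_j b^j \cdot y$ plus an accumulated error, and summing over $j$ yields $N(k) = Ny + E$ where $|E|$ is controlled by the $\sum_j d_j \leq e(b-1)$ rounding errors.

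The main obstacle is sharpening the bound $e(b-1)$ into $\tfrac12 e(b-1)+1$. This requires a signed accounting within each $G_j$: rather than treating each arithmetic progression in isolation, one observes that the shifts $b^{-j}g_b(c_j + i)$ for $i=0,\ldots,d_j-1$ themselves form a structured set (inductively, another collection of shifted grids), so the signed rounding errors within $G_j$ largely cancel and the $j$-th layer only contributes $\frac{b-1}{2}$ on average to the error rather than $b-1$. An alternative route is to follow Niederreiter's original inductive argument on $e$, in which one compares the counting function for $N$ with that for $N - d_{e-1}b^{e-1}$ and absorbs the single-digit error via a direct estimate that respects both the upper and lower rounding directions; this automatically yields the factor $\tfrac12(b-1)$, with the extra additive $1$ accounting for the boundary of the half-open interval $[0,x_k]$.
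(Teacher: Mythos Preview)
The paper does not prove this lemma; it is quoted as an intermediate step from the proof of Theorem~3.6 in \cite{Nie92} and then used as a black box in the subsequent example on van der Corput sequences. There is therefore nothing in the paper itself to compare your argument against.

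As for your sketch on its own terms: the decomposition via the base-$b$ digit expansion of $N$ together with the self-similarity identity $g_b(cb^j+s)=g_b(s)+b^{-j}g_b(c)$ is exactly the standard machinery, and your counting argument cleanly gives $|Ny-N(k)|\le\sum_j d_j\le e(b-1)$. For the purposes of the present paper this weaker bound would already suffice, since the lemma is only invoked to produce an error of order $O(\log N)$ and the constants are not tracked.

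You are right that the passage from $e(b-1)$ to $\tfrac12 e(b-1)+1$ is the only nontrivial part, and your proposal as written does not close it. Your first route---cancellation among the $d_j$ shifts within a fixed block $G_j$---is too vague as stated: the shifts $b^{-j}g_b(c_j+i)$ for $i=0,\ldots,d_j-1$ are $d_j$ distinct multiples of $b^{-j-1}$, but it is not clear without further work that the corresponding signed rounding errors against a generic $y$ average to something bounded by $\tfrac12(b-1)$ rather than $b-1$. Your second route---an induction on $e$ comparing $N$ with $N-d_{e-1}b^{e-1}$, as in Niederreiter's original argument---is the one that actually yields the stated constant, but you would need to carry it out rather than merely point to it.
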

\begin{example} \label{exa:vdc} Consider the van der Corput sequence in base $b$. We add $0$ as zeroth element of the sequence. If $N=b^e-1$, then there is only one gap length and showing the assumptions of Theorem~\ref{thm:gap_discrepancy} is trivial. If $N=ab^e-1$ with $a < b$, then the sequence consists of $b^e$ blocks with $(a-1)$ short and $1$ long gaps. The proof is again trivial with $\varepsilon < (a-1) < b$. Now let $ab^e - 1 \leq N < (a+1)b^{e} - 1$ and consider $(x_r)_{r=1}^N$ and its ordered version $(x_r^*)$. The largest gap length is at most $\frac{b}{N}$. Write $R$ for the real number such that $L_K=(R+2)/N$. Now we set $N := ab^e$ and $N^* := N + n$ with $n < b^e$. Similarly as for Kronecker sequences, we split up the complete sequence into the subsequences $(z_r)_{r=0}^{ab^e-1} = (x_r)_{r=0}^{ab^e-1}$ and $(y_j)_{j=1}^n = (x_{ab^e-1+j})_{j=1}^n$.\\[12pt] 
Any point $z_k^*$  must be of the form $z_k^* =  \frac{ib+l}{b^{e+1}}$ with $0 \leq i \leq b^{e-1}-1$ and $0 \leq l \leq a-1$ where $k=i \cdot a + l$. We know that there are $S(N,k) \in \left( \frac{(a-1)b^e}{ab^e}k -1, \frac{(a-1)b^e}{ab^e}k + 1 \right)$ small gaps between $z_{0}^*$ and $z_k^*$ and that $L(N,k) = k - S(N,k)$. Note that any point $y_j^*$ cuts a long interval of the sequence $(z_r^*)$ into a small one and a medium one. According to Lemma~\ref{lem:niederreiter}, the interval $[0,z_k^*)$ contains in total 
\begin{align*}
k^* & := N^*z_k^* + \varepsilon_k = (ab^e+n)\frac{ib+l}{b^{e+1}} + \varepsilon_k = k + \frac{a-b}{b}l + nz_k^* + \varepsilon_k
\end{align*}
points of $(x_r)$ with $\varepsilon_k \leq \tfrac{1}{2} (b-1)e + 1$. Comparing this number to $\frac{N^*}{N}k$ yields
\begin{align*}
    \left| \frac{N^*}{N}k - k^* \right| &= \left| \frac{n}{ab^e}k - \frac{a-b}{b}l - n\frac{ib+l}{b^{e+1}} + \varepsilon_k \right|\\ 
    & = \left| \frac{1}{b}(b-a) l \left( \frac{n}{ab^e} + 1\right) + \varepsilon_k \right| < \left| \frac{1}{b}(b-a)(a-1)\left(\frac{1}{a}+1\right) + \varepsilon_k \right|\\
    & < \frac{3}{8}(b-1) + \frac{1}{2}e(b-1) + 1 =:\varepsilon
\end{align*}
Now let $S(N^*)$ be the number of small gaps of $(x_r^*)$ and let $S(N^*,k)$ be the number of small gaps between $x_0^*$ and $x_k^*$. Note that $S(N^*) = S(N) + n$ and $S(N^*,k^*) = S(N,k)+k^*-k$. Hence
\begin{align*}
    \left| \frac{S(N^*)}{N^*}k^* \right. & - \left. S(N^*,k^*)\right| = \left| \frac{S(N^*)}{N}k + \varepsilon \frac{S(N^*)}{N^*} - \left(S(N,k)+k^*-k\right) \right| \\
    & \leq \left| \frac{S(N)+n}{N}k + \varepsilon \frac{S(N)+n}{N+n}  - \left( \frac{S(N)}{N}k + \tilde{\varepsilon} + \frac{N^*}{N} k + \varepsilon - k \right) \right| \\
    & = \varepsilon \frac{N-S(N)}{N^*} + \tilde{\varepsilon} \leq \frac{3}{8}(b-1) + \frac{1}{2}e(b-1) + 1 + 1\\
    & < \frac{1}{2}(e+1)(b-1) + 2 < c \frac{\log N}{\log b}.
\end{align*}
A corresponding result for $L(N^*,k)$ can be derived in the same manner and $M(N^*,k) = k - L(N^*,k) - S(N^*,k)$ follows. Thus, van der Corput sequences satisfy the assumptions of Theorem~\ref{thm:gap_discrepancy} with $\varepsilon = \varepsilon(N) = c \frac{\log N}{\log b}$. We obtain the well-known result that van der Corput sequences are low-discrepancy sequences.
\end{example}
\section*{Acknowledgments}
Parts of the research on this paper was conducted during a stay of the author at the Max-Planck Institute in Bonn whom he would like to thank for hospitality and an inspiring scientific atmosphere. Moreover, the author would like to thank the referee for many useful comments which helped to improve the presentation.


\bibliographystyle{acm}
\bibdata{references}
\bibliography{references}
\end{document}